\newcommand{\vertiii}[1]{{\left\vert\kern-0.25ex\left\vert\kern-0.25ex\left\vert #1
    \right\vert\kern-0.25ex\right\vert\kern-0.25ex\right\vert}}
\theoremstyle{plain}
\title[ Quantitative Oppenheim  for generic forms]{A quantitative Oppenheim Theorem for generic ternary quadratic forms}
\author{Anish Ghosh }
\author{Dubi Kelmer}
\thanks{Anish Ghosh is partially supported by ISF-UGC. Dubi Kelmer is partially supported by NSF grant DMS-1401747.}
\email{ghosh@math.tifr.res.in}
\address{School of Mathematics, Tata Institute of Fundamental Research, Homi Bhabha Road, Colaba, Mumbai 400005, India}
\email{kelmer@bc.edu}
\address{Boston College, Boston, MA}
\subjclass{}%
\keywords{}%
\date{\today}%
\dedicatory{}%
\begin{document}

\begin{abstract}
We prove a quantitative version of Oppenheim's conjecture for generic ternary indefinite quadratic forms. Our results are inspired by and analogous to recent results for diagonal quadratic forms due to Bourgain \cite{Bourgain16}.
\end{abstract}

 \maketitle
 
 \section{Introduction}
 
Let $Q$ be an indefinite quadratic form in $d\geq 3$ variables that is not a multiple of a quadratic form with rational coefficients. The Oppenheim conjecture, proved in a breakthrough paper by Margulis \cite{Margulis1989}, states that $\{Q(n)~:~n\in \Z^d\}$ is dense in $\R$.  Margulis' approach used the dynamics of unipotent flows on homogeneous spaces and was not quantitative or effective, i.e. it did not give any bounds on the size of the integer vector $n$ needed to approximate a real number up to a given accuracy. One of the main difficulties of giving an effective proof is distinguishing between rational forms (for which the statement is false) and irrational forms that are very well approximated by rational forms. Recently, Bourgain investigated a quantitative version of the Oppenheim conjecture for generic diagonal forms. We follow his notation here. Margulis' theorem implies that 
 there are sequences $N(k)\to\infty$ and $\delta(k)\to 0$ (depending on $Q$) such that for all sufficiently large $k$,
 \begin{equation}\label{e:effective}
 \sup_{|\xi|\leq N(k)}\min_{|n|\leq k}|Q(n)-\xi|\leq \delta(k).\end{equation}
 Earlier, in \cite{LindenstraussMargulis14},  Lindenstrauss and Margulis proved an effective version of Oppenheim's conjecture valid for \emph{all} irrational indefinite quadratic forms in three variables, with $N(k)$ and $\delta(k)^{-1}$ growing logarithmically in $k$.  One would of course expect significantly stronger results for generic forms. In \cite{Bourgain16}, Bourgain considered diagonal forms $Q_{\alpha,\beta}(x,y,z)=x^2+\alpha y^2-\beta z^2$ with $\alpha,\beta>0$. For these forms he showed that  \eqref{e:effective} holds for almost all $\alpha$, as long as $\frac{N(k)^{3}}{k^\eta \delta(k)^{11/2}}\to 0$ with $\eta<1$. Moreover, assuming the Lindel\"off hypothesis for the Riemann zeta function it holds as long as 
 $\frac{N(k)}{k^{\eta}\delta(k)^2}\to 0$.

 In this note we consider the space of all indefinite ternary quadratic forms, which comes equipped with a natural probability measure, and prove a quantitative Oppenheim theorem for generic indefinite ternary quadratic forms, similar in spirit to Bourgain's result.

 \begin{thm}\label{t:main}
Assume that there is $\eta<1$ such that $\frac{N(k)}{k^{\eta}\delta(k)^2}\to 0$ and $\frac{N(k)^{3/2}}{k^\eta\delta(k)}\to 0$ as $k\to \infty$. Then, for almost every indefinite ternary quadratic form $Q$, there is a constant $c=c(Q)>0$ and $T_0=T_0(Q)>0$ such that for all $k\geq T_0$,
$$\max_{|\xi|\leq N(k)}\min_{\|n\| \leq ck} |Q(n)-\xi|\leq \delta(k).$$
\end{thm}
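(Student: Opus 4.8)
The plan is to work on the homogeneous space $X = \mathrm{SL}_3(\mathbb{R})/\mathrm{SL}_3(\mathbb{Z})$, parametrizing indefinite ternary quadratic forms by the coset $g\mathrm{SL}_3(\mathbb{Z})$ where $Q = Q_0 \circ g$ for a fixed reference form $Q_0$. The natural probability measure on the space of forms should correspond to the Haar measure on $X$, so "almost every $Q$" becomes "almost every lattice $g\mathbb{Z}^3$." The key is to reformulate the counting problem: to show that $\{Q(n) : \|n\| \leq ck\}$ is $\delta(k)$-dense in $[-N(k), N(k)]$, I would count lattice points of the orbit $g\mathbb{Z}^3$ lying in the region $\{v : |Q_0(v) - \xi| \leq \delta, \|v\| \leq ck\}$ for each target value $\xi$, and show this count is positive (in fact grows) uniformly over all $|\xi| \leq N(k)$.

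The central analytic step is a second-moment (variance) estimate. Let $F_{\xi,\delta,T}(g) = \#\{n \in \mathbb{Z}^3 : \|n\| \leq T, |Q_0(gn) - \xi| \leq \delta\}$, or better a smoothed version obtained by summing a bump function against the Siegel transform. First I would compute the expected value $\int_X F_{\xi,\delta,T}\,dg$ using the Siegel--Veech / Siegel integration formula, which expresses the average of the Siegel transform as an integral of the test function over $\mathbb{R}^3$; this gives a main term of the expected order of magnitude (roughly $T\delta$ coming from the volume of the relevant region). Next, and this is the crux, I would bound the variance $\int_X |F_{\xi,\delta,T} - \mathbb{E}(F_{\xi,\delta,T})|^2\,dg$ using Rogers' formula (the second-moment formula for the Siegel transform), controlling the diagonal and off-diagonal contributions. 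The hypotheses $\frac{N(k)}{k^{\eta}\delta(k)^2}\to 0$ and $\frac{N(k)^{3/2}}{k^\eta\delta(k)}\to 0$ are precisely what I expect to be needed so that the variance is of smaller order than the square of the mean, summed or maximized over the relevant family of targets $\xi$.

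Given the mean and variance bounds, I would apply a Chebyshev / Borel--Cantelli argument along a dyadic (or suitably chosen geometric) sequence $k = k_j$ to deduce that for almost every $g$, the count $F_{\xi,\delta(k),ck}(g)$ is positive for \emph{all} $|\xi| \leq N(k)$ once $k$ is large. To pass from a countable net of $\xi$ values and a discrete sequence of $k$ to the full supremum and all large $k$, I would discretize the interval $[-N(k),N(k)]$ into $O(N(k)/\delta(k))$ points spaced by $\delta(k)$ and use a continuity/Lipschitz argument in $\xi$ together with monotonicity in $T$ to interpolate between consecutive $k_j$; this is where the union bound over $\xi$ forces the precise shape of the growth conditions in the hypothesis. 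The final step is translating "the orbit count is positive for the lattice $g\mathbb{Z}^3$" back into the statement about $Q(n)$, which is immediate from the parametrization.

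The main obstacle I anticipate is the variance estimate and the bookkeeping in the union bound: Rogers' formula produces several terms, and the off-diagonal terms (pairs of lattice vectors that are proportional, and the genuinely generic pairs) must each be shown to contribute at most on the order of the main term squared, uniformly in $\xi$ over the whole range $|\xi|\leq N(k)$. Controlling these contributions while keeping the error small enough that the Borel--Cantelli sum converges is precisely what ties the exponents together, and I expect the two stated conditions to emerge as exactly the thresholds that make both the "second-moment beats the square of the mean" inequality and the convergence of the Borel--Cantelli series hold simultaneously.
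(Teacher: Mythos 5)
Your plan is viable, but it takes a genuinely different route from the paper's. The paper's engine is not a variance estimate for a counting function: it is an effective mean ergodic theorem for the $\SL_2(\R)$-action on $X_3$ coming from the spin cover $\iota:\SL_2(\R)\to\SO_{Q_0}$ of \eqref{e:spin}, whose spectral-gap rate yields the shrinking-target bound $\mu\{x:xH_k\cap B=\emptyset\}\ll 1/(k^{\eta}\mu(B))$ (Corollary \ref{c:Shrinking}). Rogers' second-moment formula enters the paper only once, and crudely, via Cauchy--Schwarz, to lower-bound the measure of the target, $\mu(B_{\xi,\delta})\gg\min\{1,\delta L\}$ (Lemma \ref{l:VolBound}); a point of $xH_k\cap B_{\xi,\delta}$ then produces $n\in\Z^3$ with $|Q_0^g(n)-\xi|\le\delta$ and $\|n\|\le\|g\|Lk$, $L=\max\{1,|\xi|^{1/2}\}$ (Proposition \ref{p:FiniteBound}), after which the discretization of $[-N(k),N(k)]$ and the dyadic Borel--Cantelli argument are exactly as in your last step. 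Your route instead makes Rogers' formula itself the engine: Siegel's formula for the mean ($\asymp\delta T$), Rogers for the variance ($\ll\delta T$), Chebyshev, and the same endgame; this is essentially the approach of Athreya and Margulis \cite{AthreyaMargulis16}. What each buys: yours is more elementary (no representation theory or effective ergodic theorems), and since Rogers' identity is exact it gives a per-target failure probability $O(1/(\delta k))$ with no $k^{\eta}$ loss, so strictly weaker hypotheses would suffice, and it extends more readily to more variables; the paper's argument needs only crude moment information about the Siegel transform and runs on the authors' general shrinking-target machinery for semisimple actions. Two points to watch if you carry your plan out: (i) as literally defined, $F_{\xi,\delta,T}$ is not $\SL_3(\Z)$-invariant (the condition $\|n\|\le T$ is not preserved under $n\mapsto n\gamma$), so you must count points of the lattice $\Z^3 g$ in the fixed region $\{v:\|v\|\le T,\ |Q_0(v)-\xi|\le\delta\}$ --- i.e.\ use the genuine Siegel transform, as in your smoothed variant --- and only afterwards convert the bound $\|ng\|\le T$ into $\|n\|\le c(Q)k$; this conversion is precisely where the constant $c(Q)$ of the theorem arises, here as in the paper; (ii) the mean and variance bounds must be uniform over $|\xi|\le N(k)$, which requires $k\gg N(k)^{1/2}$ so that the volume of the truncated hyperboloid shell is really $\asymp\delta k$, and the off-diagonal Rogers terms with $p\ne\pm q$ contribute only when $|\xi|\ll\delta$ (since $pv,qv$ both in the region forces $|\xi|\,|q^2-p^2|\le\delta(p^2+q^2)$); both are manageable, and they are where (a weak form of) your second hypothesis enters.
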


\begin{rem}
The second assumption, $\frac{N(k)^{3/2}}{k^\eta\delta(k)}\to 0$, is probably not needed. We note that it is automatically satisfied as long as $N(k)\ll \delta(k)^{-2}$. However, using our method it is harder to get very good approximation of large points (i.e., when  $|\xi|\gg \delta^{-2}$) leading to the additional assumption.
\end{rem}
 
Our methods involve an effective mean ergodic theorem for semisimple groups and some results from the geometry of numbers. They are very different from those in \cite{Bourgain16}. In an earlier work, we studied the \emph{shrinking target problem} for actions of semisimple groups on homogeneous spaces and as a consequence, obtained a related optimal effective result for generic ternary forms, see Theorem $5$ in \cite{GhoshKelmer15}. More results along these lines  are proved in \cite{GhoshGorodnikNevo16, AthreyaMargulis16}. Several important papers have previously studied the study of the distribution of values of a quadratic form \cite{DaniMargulis93, EskinMargulisMozes98, Sarnak97} and sometimes the work ``quantitative" is used in the literature in this context.

 \section{Setup}
\subsection{Space of forms}
In order to make the notion of a generic ternary quadratic form explicit we use the following parametrization for the space of forms. Recall that the action of $G=\SL_3(\R)$ on ternary quadratic forms forms is given by 
$$Q^g(v)=Q(vg),$$
with $g\in \SL_3(\R)$ acting on $v\in \R^3$ linearly. We say that two forms are equivalent if $Q_1=\lambda Q_2^\g$ with $\lambda\in \R$ and  $\g\in\G=\SL_3(\Z)$. Note that equivalent forms take the same values on $\Z^3$ after scaling by a constant. To avoid the scaling ambiguity we restrict to forms of determinant one. To parametrize the space of determinant one forms up to equivalence, fix a form $Q_0(v)$ given by
\begin{equation} \label{e:Q0}Q_0(x,y,z)=x^2+y^2-z^2,
\end{equation}
and note that any determinant one indefinite ternary quadratic form is given by $Q=Q_0^g$ for some $g\in \SL_3(\R)$. Moreover, two such forms  $Q_0^{g},\;Q_0^{g'}$ are equivalent if and only if $g'=\g g$ with $\g\in \G$.  We can thus parametrize the space of determinant one indefinite ternary quadratic forms up to equivalence, by the space 
$$X_3=\SL_3(\Z)\bk \SL_3(\R).$$
We recall that $X_3$ also parametrizes the space of unimodular lattices in $\R^3$, explicitly, a point $x=\G g\in X_3$ corresponds to the lattice $\Lambda=\Z^3g$ and to the form $Q=Q_0^g$. The probability measure $\mu$ on $X_3$ coming from Haar measure on $\SL_3(\R)$ gives us a natural measure on the space of forms.\\

\subsection{The $\SL_2(\R)$ action}
Let $H=\SL_2(\R),\; G=\SL_3(\R)$ and $\G=\SL_3(\Z)$. Let $Q_0$ be as in \eqref{e:Q0} and consider the  double spin cover map $\iota:H\to \SO_{Q_0}$ given by
\begin{equation}\label{e:spin}
\iota\begin{pmatrix} a& b\\ c & d\end{pmatrix}=
\begin{pmatrix}\frac{a^2-b^2-c^2+d^2}{2} & ac-bd & \frac{a^2-b^2+c^2-d^2}{2}\\
ab-cs &bc+ad& ab+cd\\
\frac{a^2+b^2-c^2-d^2}{2}& ac+bd& \frac{a^2+b^2+c^2+d^2}{2}
\end{pmatrix}
\end{equation}
This gives an irreducible right action of $H$ on $G$ and hence also on $X_3=\G\bk G$.\\

\noindent We give a norm on $H$ using the spin cover map by defining
\begin{equation}\label{e:norm}
\norm{h}:=\|\iota (h)^{-1}\|_2
\end{equation}
where $\|\cdot\|_2$ denotes the the Hilbert-Schmidt norm on $G$ given by 
$\norm{g}_2^2=\tr(g^tg)$. 
We denote $F(t)\asymp G(t)$ if there is some constant $c>1$ such that 
 $$c^{-1}F(t)\leq G(t)\leq cF(t).$$
\begin{lem}
With this norm we have that 
$$m(H_t)\asymp t$$
where $m$ denotes Haar measure on $H$.
\end{lem}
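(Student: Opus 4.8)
The plan is to exploit the Cartan ($KAK$) decomposition of $H=\SL_2(\R)$ and reduce the computation of $m(H_t)$ to a one-variable integral over the diagonal part. The crucial preliminary observation is that the norm $\norm{\cdot}$ defined in \eqref{e:norm} is invariant under left and right multiplication by $K=\SO(2)\subset H$. Indeed, since $\iota$ is a group homomorphism we have $\iota(h)^{-1}=\iota(h^{-1})$ and $\iota(k_1 a k_2)^{-1}=\iota(k_2)^{-1}\iota(a)^{-1}\iota(k_1)^{-1}$; plugging a rotation $k_\theta=\left(\begin{smallmatrix}\cos\theta&-\sin\theta\\\sin\theta&\cos\theta\end{smallmatrix}\right)$ into \eqref{e:spin} shows that $\iota(k_\theta)$ is a rotation in the $(x,y)$-plane, so $\iota(K)\subset\SO(3)\subset O(3)$. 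Because the Hilbert--Schmidt norm $\norm{g}_2^2=\tr(g^tg)$ is invariant under left and right multiplication by orthogonal matrices, it follows that $\norm{k_1 a k_2}=\norm{a}$ for all $k_1,k_2\in K$.

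Next I would introduce the diagonal one-parameter subgroup $a_s=\operatorname{diag}(e^s,e^{-s})$, $s\ge 0$, and use the standard Cartan decomposition $h=k_1 a_s k_2$ together with the associated Haar measure
$$ dm(h)=c\,\sinh(2s)\,dk_1\,ds\,dk_2, $$
where $c>0$ and $dk_i$ is Haar measure on $K$ (the density $\sinh(2s)$ is the $\SL_2$ Jacobian, the single positive root acting on $\operatorname{diag}(s,-s)$ by $2s$ with multiplicity one). By the bi-$K$-invariance just established, $\norm{h}$ depends only on $s$, so $H_t$ is a union of double cosets, and it suffices to determine for which $s$ one has $a_s\in H_t$. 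For this I compute $\iota(a_s)$ directly from \eqref{e:spin}: it is the hyperbolic rotation
$$ \iota(a_s)=\begin{pmatrix}\cosh 2s&0&\sinh 2s\\0&1&0\\ \sinh 2s&0&\cosh 2s\end{pmatrix}, $$
so $\iota(a_s)^{-1}$ is the same matrix with $s$ replaced by $-s$, and summing the squares of its entries gives $\norm{a_s}^2=\norm{\iota(a_s)^{-1}}_2^2=2\cosh(4s)+1\asymp e^{4s}$. Hence the condition $\norm{a_s}\le t$ is equivalent to $s\le s_0(t)$, where $\cosh(4s_0(t))=(t^2-1)/2$; in particular $H_t$ is empty for $t<\sqrt 3$ and $s_0(t)\sim\tfrac12\log t$ as $t\to\infty$.

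Finally I would integrate. Using the formula for $dm$,
$$ m(H_t)=c'\int_0^{s_0(t)}\sinh(2s)\,ds=\tfrac{c'}{2}\bigl(\cosh(2s_0(t))-1\bigr), $$
and the double-angle identity $\cosh(4s_0)=2\cosh^2(2s_0)-1$ gives $\cosh(2s_0(t))=\tfrac12\sqrt{t^2+1}$. Therefore $m(H_t)=\tfrac{c'}{4}\bigl(\sqrt{t^2+1}-2\bigr)\asymp t$ for all $t\ge\sqrt 3$, which is the claim. There is no genuine obstacle here: once the bi-$K$-invariance is in place the whole lemma collapses to this elementary integral, and the only points requiring care are checking $\iota(K)\subset O(3)$ and getting the Jacobian $\sinh(2s)$ (equivalently the normalization of $a_s$) right.
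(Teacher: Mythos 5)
Your proof is correct and rests on the same skeleton as the paper's: the Cartan decomposition of $H$, the Haar density $\sinh$ in the Cartan coordinate, and exponential growth of the norm in that coordinate. The genuinely different step is how you handle the angular variables. The paper computes $\norm{h}$ for a general element $h=k_\theta a_t k_{\theta'}$ directly from \eqref{e:spin}, obtains $\norm{h}^2=1+2(1+2\sinh^2(t))(1-\sin^2(\theta)\cos^2(\theta))$, and then discards the angular factor using $3/4\leq 1-\sin^2(\theta)\cos^2(\theta)\leq 1$ to get $\norm{h}\asymp e^t$ and hence $m(H_t)\asymp t$. You instead prove exact bi-$K$-invariance of the norm ($\iota(K)$ consists of rotations about the $z$-axis, and the Hilbert--Schmidt norm is invariant under orthogonal factors on either side), reduce everything to the single matrix $\iota(a_s)^{-1}$, and obtain a closed formula $m(H_t)=\frac{c'}{4}\bigl(\sqrt{t^2+1}-2\bigr)$. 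Besides being cleaner, your invariance argument shows that the paper's displayed norm formula cannot be right: the norm cannot depend on $\theta$ at all. Indeed, a direct computation using $ad-bc=1$ gives the identity $\norm{g}^2=(a^2+b^2+c^2+d^2)^2-1$ for every $g=\left(\begin{smallmatrix}a&b\\c&d\end{smallmatrix}\right)\in H$, whence $\norm{k_\theta a_t k_{\theta'}}^2=4\cosh^2(t)-1=1+2(1+2\sinh^2(t))$ with no angular factor. The paper's slip is harmless---its spurious factor lies in $[3/4,1]$, so the conclusion $\norm{h}\asymp e^t$ survives---but your route gives the correct intermediate statement.

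Two small repairs to your write-up. First, with the paper's right-action conventions the explicit map \eqref{e:spin} is actually an anti-homomorphism, $\iota(g_1g_2)=\iota(g_2)\iota(g_1)$ (check it on $g_1=\left(\begin{smallmatrix}1&1\\0&1\end{smallmatrix}\right)$, $g_2=\left(\begin{smallmatrix}1&0\\1&1\end{smallmatrix}\right)$), not a homomorphism as you assert; this merely permutes the order of the orthogonal factors in $\iota(k_1 a k_2)^{-1}$, so your bi-$K$-invariance conclusion is untouched. Second, your closing claim that $m(H_t)\asymp t$ ``for all $t\geq\sqrt{3}$'' is not literally true: at $t=\sqrt{3}$ the left side vanishes while $t$ does not, so no uniform constants exist near that endpoint. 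The estimate $\sqrt{t^2+1}-2\asymp t$ holds for, say, $t\geq 2$, which is all the lemma requires, since (as in the paper's own ``for large $t\gg 1$'') it is stated and used asymptotically.
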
 
\begin{proof}
Consider the $KA^+K$ decomposition of $H$ with $K=\SO(2)$ and 
$$A^+=\left\{a_t=\begin{pmatrix} e^{t/2} & 0\\ 0 & e^{-t/2}\end{pmatrix}: t\geq 0\right\}.$$
Let $k_\theta=\left(\begin{smallmatrix} \cos(\theta) & \sin(\theta)\\ -\sin(\theta)& \cos(\theta)\end{smallmatrix}\right)$ parametrize $K$.
In the coordinates $h=k_\theta a_tk_{\theta'}$  the Haar measure of $H$ is given by
$$dm(h)=\sinh(t)d\theta d\theta'dt,$$
Moreover, a direct computation using \eqref{e:spin} shows that for $h=k_\theta a_tk_{\theta'}$ we have
$$\norm{h}^2=1+2(1+2\sinh^2(t))(1-\sin^2(\theta)\cos^2(\theta)).$$
In particular, for large $t\gg1$ we have that $\|h\|\asymp e^t$ and hence $m(H_t)\asymp t$ as claimed.
\end{proof}

 \subsection{Mean Ergodic Theorem} \label{s:MET}
 We note that the representation $\iota:\SL_2(\R)\to \SL_3(\R)$ defined in \eqref{e:spin} is irreducible and hence the $H$ action on $X_3$ 
 satisfies an effective mean ergodic theorem (see \cite{GhoshGorodnikNevo14, GorodnikNevo10}) with best possible rate. Let 
 $H_t=\{h\in H: \|h\|\leq t\}$ and 
 for $f \in L^{2}(X_3)$, consider the unitary averaging operator:
 \begin{equation}
 \pi_{t}(f)(x) := \frac{1}{m(H_t)}\int_{H_t}f(x\iota(h))dm(h).
 \end{equation}
We then have 
\begin{thm}
For any  $0<\kappa<1/2$, for any $f \in L^{2}(X_3)$, 
 \begin{equation}\label{eq:met}
 \|\pi_{t}f - \int_{X_3}fd\mu\|_{2} \ll_\kappa m(H_t)^{-\kappa}\|f\|_2.
 \end{equation}
\end{thm}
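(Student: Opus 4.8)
The plan is to prove the equivalent statement that the averaging operator $\pi_t$ has operator norm $\ll_\kappa m(H_t)^{-\kappa}$ on the subspace $L^2_0(X_3)$ of mean-zero functions. First I would reduce to this subspace: writing $f = \int_{X_3} f\,d\mu + f_0$ with $f_0 \in L^2_0(X_3)$, the operator $\pi_t$ fixes constants and preserves $L^2_0(X_3)$, so $\pi_t f - \int_{X_3}f\,d\mu = \pi_t f_0$ and $\|f_0\|_2 \le \|f\|_2$; hence it suffices to bound $\|\pi_t\|$ on $L^2_0(X_3)$. Denote by $(\pi, L^2_0(X_3))$ the unitary representation of $H$ obtained by restricting the $\SL_3(\R)$-action via $\iota$, and let $\beta_t = \frac{1}{m(H_t)}\mathbf{1}_{H_t}\,dm$ be the normalized averaging measure, so that $\pi_t = \pi(\beta_t)$.

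The key structural input is that $(\pi, L^2_0(X_3))$ is a \emph{tempered} representation of $H=\SL_2(\R)$, i.e.\ it is weakly contained in the regular representation $\lambda_H$. This is where irreducibility of $\iota$ enters: by Kazhdan's property (T) for $\SL_3(\R)$, the matrix coefficients of $L^2_0(X_3)$ decay at a fixed positive rate, bounded by a power of the Harish--Chandra function $\Xi_{\SL_3}$. Since $\iota$ is the principal (irreducible) embedding, the element $\iota(a_s)$ is regular semisimple in $\SL_3(\R)$ with logarithmic eigenvalues $(s,0,-s)$, so $\Xi_{\SL_3}(\iota(a_s))$ decays like $e^{-2s}$ up to polynomial factors, far faster than the rate $e^{-s/2}$ governing $\Xi_H(a_s)$. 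Consequently the spherical matrix coefficients of $\pi$ along the $H$-Cartan decay at least as fast as $\Xi_H$, which is exactly the temperedness criterion. I would either derive this directly or simply invoke the effective mean ergodic theorem for semisimple groups with a spectral gap from \cite{GhoshGorodnikNevo14, GorodnikNevo10}, whose hypotheses are met precisely because $\iota$ is irreducible.

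Granting temperedness, I would bound the operator norm by passing to the regular representation. Since $\pi \prec \lambda_H$, one has $\|\pi(\beta_t)\|^2 = \|\pi(\check\beta_t * \beta_t)\| \le \|\lambda_H(\check\beta_t * \beta_t)\|$, where $\check\beta_t(h)=\beta_t(h^{-1})$. As $\check\beta_t * \beta_t \ge 0$, its image under $\lambda_H$ is a positive operator whose norm is controlled by the spherical transform at the tempered point:
\[
\|\lambda_H(\check\beta_t * \beta_t)\| \le \int_H (\check\beta_t * \beta_t)(h)\,\Xi_H(h)\,dm(h) = \frac{1}{m(H_t)^2}\int_{H_t}\int_{H_t}\Xi_H(h_1^{-1}h_2)\,dm(h_1)\,dm(h_2).
\]
Using $\Xi_H(a_s)\asymp (1+s)e^{-s/2}$ together with the Haar density $\sinh(s)\,ds\,d\theta\,d\theta'$ from the previous lemma, and the fact that the norm-ball $H_t$ corresponds to Cartan radius $s \lesssim \log t$ with $m(H_t)\asymp t$, the remaining task is to estimate this double integral. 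Writing $g = h_1^{-1}h_2$ and using unimodularity, it equals $\frac{1}{m(H_t)^2}\int_H \Xi_H(g)\,m(H_t \cap H_t g^{-1})\,dm(g)$, and the main point is that the overlap $m(H_t \cap H_t g^{-1})$ decays in the Cartan length of $g$ fast enough that, weighted against $\Xi_H$, the whole expression is $\ll_\epsilon m(H_t)^{-1+\epsilon}$.

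This last convolution estimate is the main obstacle: the crude bound $m(H_t\cap H_t g^{-1})\le m(H_t)$ is too lossy and only yields a logarithmic saving, so one must track the precise rate at which two shifted norm-balls decouple. The additional complication is that, as computed in the previous lemma, $H_t$ is not bi-$K$-invariant (its defining norm depends on the left $K$-angle $\theta$), so $\beta_t$ is not spherical and the spherical-transform bound must be applied with care. Once the overlap estimate is in place, combining it with the temperedness reduction yields $\|\pi_t\|_{L^2_0\to L^2_0} \ll_\epsilon m(H_t)^{-1/2+\epsilon}$, which is exactly the claimed bound $m(H_t)^{-\kappa}$ for every $\kappa<1/2$; the restriction $\kappa<1/2$ is sharp because the tempered part of the spectrum already saturates the regular-representation rate $m(H_t)^{-1/2}$.
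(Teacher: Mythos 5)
First, a point of comparison: the paper does not actually prove this theorem. It is quoted from \cite{GhoshGorodnikNevo14, GorodnikNevo10}, with the only verification being that $\iota$ is irreducible, so your fallback option (``simply invoke the effective mean ergodic theorem from those references'') coincides exactly with what the paper does. Your primary route --- temperedness of $L^2_0(X_3)$ as an $H$-representation plus Harish--Chandra majorization --- is essentially the argument inside those references, and your reduction to $L^2_0$, the weak-containment inequality $\|\pi(f)\|\le\|\lambda_H(f)\|$, and the identification of $\kappa<1/2$ as the tempered/regular-representation threshold are all correct. However, as a self-contained proof the proposal has two genuine gaps.

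First, the temperedness derivation is a non sequitur as written. Property (T) gives matrix coefficients of $L^2_0(X_3)$ bounded by $\Xi_{\SL_3}^c$ for \emph{some} unspecified $c>0$; restricting along $\iota(a_s)$ then gives decay $e^{-2cs}$, which beats $\Xi_H(a_s)\asymp(1+s)e^{-s/2}$ only if $c\ge 1/4$. You need the quantified exponent for $\SL_3(\R)$ --- it is available (e.g.\ Oh's uniform pointwise bounds give decay $e^{-s}$, i.e.\ $\Xi_{\SL_3}^{1/2}$, along $\iota(a_s)$), but it must be invoked; ``a fixed positive rate, hence tempered'' does not follow. Second, and more seriously, your $TT^*$ detour manufactures the very obstacle you then leave unproved. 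The standard short path is Herz's majorization principle: for $f\ge 0$, $\|\lambda_H(f)\|\le\int_H f\,\Xi_H\,dm$. Applied \emph{directly} to the nonnegative function $\beta_t$, this gives $\|\pi_t\|_{L^2_0\to L^2_0}\le \tfrac{1}{m(H_t)}\int_{H_t}\Xi_H\,dm \ll t^{-1/2}\log t$, since $H_t$ is sandwiched between bi-$K$-invariant Cartan balls of radius $\log t+O(1)$ (so the failure of bi-$K$-invariance you worry about is harmless) and $\int_0^{R}(1+s)e^{-s/2}\sinh(s)\,ds\ll Re^{R/2}$. That is already the theorem for every $\kappa<1/2$. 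On your route, by contrast, the majorization is applied to $\check\beta_t*\beta_t$: the trivial overlap bound gives only $O(\log t)$, which is useless, and even an overlap estimate strong enough to make the double integral $\ll t^{-1/2}$ only yields $\|\pi_t\|\ll t^{-1/4}$, half the claimed exponent. To recover $\kappa<1/2$ you need essentially optimal overlap decay, $m(H_t\cap H_tg^{-1})\ll m(H_t)\,\Xi_H(g)$ up to logarithms --- a genuine hyperbolic-geometry estimate that you do not prove, and whose cleanest proof is itself spectral, i.e.\ circular. So either supply that estimate, or (better) drop the $TT^*$ step and run the direct majorization.
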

As a direct consequence, we can estimate the measure of point $x\in X_3$ whose orbit $xH_t=\{x\iota(h): \|h\|\leq t\}$ miss a small set $B\subseteq X_3$. Explicitly, we have
\begin{cor}\label{c:Shrinking}
For any $\eta<1$, for any $B\subseteq X_3$
$$\mu\{x\in X_3: xH_t\cap B=\emptyset\}\ll_\eta \frac{1}{t^{\eta}\mu(B)}.$$
\end{cor}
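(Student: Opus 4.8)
The plan is to deduce this corollary from the effective mean ergodic theorem \eqref{eq:met} by applying it to the indicator function of the set $B$, and then using Chebyshev's inequality to control the measure of the bad set of points whose orbit misses $B$. First I would set $f=\mathbf{1}_B$, so that $\int_{X_3}f\,d\mu=\mu(B)$ and $\|f\|_2^2=\mu(B)$. The operator $\pi_t(f)(x)$ then measures the fraction of the orbit $x\iota(H_t)$ that lands inside $B$; concretely, $\pi_t(f)(x)=\frac{1}{m(H_t)}\int_{H_t}\mathbf{1}_B(x\iota(h))\,dm(h)$ is positive precisely when the orbit segment meets $B$ on a set of positive Haar measure.

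The key observation is that if $x$ is a point whose orbit \emph{misses} $B$ entirely, i.e.\ $x\iota(H_t)\cap B=\emptyset$, then $\pi_t(f)(x)=0$, and hence for such $x$ we have $|\pi_t(f)(x)-\mu(B)|=\mu(B)$. Therefore the bad set $E_t:=\{x\in X_3: x\iota(H_t)\cap B=\emptyset\}$ is contained in the set where $\pi_t(f)-\mu(B)$ has absolute value at least $\mu(B)$. By Chebyshev's inequality applied to the $L^2$ bound,
\[
\mu(E_t)\leq \frac{\|\pi_t(f)-\mu(B)\|_2^2}{\mu(B)^2}.
\]
Now I would invoke \eqref{eq:met}: for any $\kappa<1/2$ we have $\|\pi_t(f)-\mu(B)\|_2^2\ll_\kappa m(H_t)^{-2\kappa}\|f\|_2^2=m(H_t)^{-2\kappa}\mu(B)$. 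Substituting gives
\[
\mu(E_t)\ll_\kappa \frac{m(H_t)^{-2\kappa}\mu(B)}{\mu(B)^2}=\frac{m(H_t)^{-2\kappa}}{\mu(B)}.
\]

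Finally I would translate $m(H_t)^{-2\kappa}$ into the stated power of $t$. By the preceding lemma $m(H_t)\asymp t$, so $m(H_t)^{-2\kappa}\asymp t^{-2\kappa}$. Writing $\eta=2\kappa$, as $\kappa$ ranges over $(0,1/2)$ the exponent $\eta$ ranges over $(0,1)$, which matches the statement's hypothesis $\eta<1$; this yields $\mu(E_t)\ll_\eta t^{-\eta}/\mu(B)$, as claimed. The argument is essentially routine once \eqref{eq:met} is in hand, so I do not anticipate a serious obstacle; the only points requiring a little care are the reduction to the indicator function (making sure the orbit-miss condition really forces $\pi_t(f)=0$) and the bookkeeping relating $\kappa$ to $\eta$ via the lemma's volume asymptotic.
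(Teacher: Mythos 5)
Your proof is correct and follows exactly the paper's own argument: apply the mean ergodic theorem to $f=\mathbf{1}_B$, note $\pi_t(f)$ vanishes on the set of orbits missing $B$, and use Chebyshev (which the paper writes as integrating $|\pi_t(f)-\mu(B)|^2$ over the bad set) together with $m(H_t)\asymp t$ and $\eta=2\kappa$. No gaps.
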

\begin{proof}
Let $\cC_{t,B}=\{x\in X_3: xH_t\cap B=\emptyset\}$.
Let $f$ denote the indicator function of $B$, and note that if $x\in \cC_{t,B}$ then $\pi_t(f)(x)=0$.
Applying the mean ergodic theorem with $\eta=2\kappa$ we get
\begin{eqnarray*}
\mu(B)^2\mu(\cC_{t,B})&=&\int_{\cC_{t,B}}|\pi_t(f)(x)-\mu(B)|^2d\mu(x)\\
&\leq &\|\pi_t(f)(x)-\mu(B)\|^2\ll_\eta  \frac{\|f\|_2^2}{m(H_t)^{\eta}}\ll \frac{\mu(B)}{t^\eta}.
\end{eqnarray*}
and dividing both sides by $\mu(B)^2$ gives the desired estimate. 
\end{proof}

 \subsection{Unfolding}
 We need the following result relating small sets in $\R^3$ to corresponding sets on $X_3$. Explicitly, thinking of $X_3$ as the space of unimodular lattices, for any set $\Omega\subseteq \R^3$ let $B_\Omega\subset X_3$ be defined as 
 $$B_\Omega=\{\Lambda\in X_3: \Lambda \cap \Omega\neq \emptyset\}.$$
 We then have the following result 
 \begin{lem}\label{l:VolBound}
For any set $\Omega\subseteq \R^3$ we have
 $$\mu(B_\Omega)\geq \min \left\{\frac{1}{5},\frac{\vol(\Omega)}{5}\right\}$$
 \end{lem}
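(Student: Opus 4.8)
The plan is to use the second-moment (Rogers) method for the Siegel transform, viewing $X_3$ as the space of unimodular lattices. For the indicator function $f=\mathbf 1_\Omega$ introduce the Siegel transform $\widehat f(\Lambda)=\sum_{0\neq v\in\Lambda}f(v)=\#\big(\Lambda\cap\Omega\setminus\{0\}\big)$, the number of nonzero points of $\Lambda$ in $\Omega$. Since $\{\Lambda:\widehat f(\Lambda)\geq 1\}\subseteq B_\Omega$, it suffices to bound $\mu(\widehat f\geq 1)$ from below. By Siegel's mean value formula the first moment is exactly $\int_{X_3}\widehat f\,d\mu=\vol(\Omega)$, and since $\widehat f$ is nonnegative and integer valued, Cauchy--Schwarz gives
$$\mu(\widehat f\geq 1)\ \geq\ \frac{\big(\int_{X_3}\widehat f\,d\mu\big)^2}{\int_{X_3}\widehat f^{\,2}\,d\mu}\ =\ \frac{\vol(\Omega)^2}{\int_{X_3}\widehat f^{\,2}\,d\mu}.$$
Thus everything reduces to an upper bound on the second moment.

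For the second moment I would invoke Rogers' formula, which in dimension $d=3$ (precisely the regime $k=2\leq d-1$ where the formula is clean, and where the hypothesis $d=3$ enters) writes $\int_{X_3}\widehat f^{\,2}\,d\mu$ as a main term from linearly independent pairs of lattice vectors, equal to $\big(\int_{\R^3}f\big)^2=\vol(\Omega)^2$, plus a correction from linearly dependent pairs. Writing a dependent pair as $(av_0,bv_0)$ with $v_0$ a primitive generator of $\Lambda\cap\R v_0$ and $a,b\in\Z\setminus\{0\}$, and applying the Siegel formula for primitive vectors (which carries a factor $1/\zeta(3)$, together with a factor $\tfrac12$ from the two signs of $v_0$), the correction equals $\frac{1}{2\zeta(3)}\int_{\R^3}\sum_{a,b\neq0}f(ax)f(bx)\,dx$. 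Bounding each inner integral by $\vol(\Omega)/\max(|a|,|b|)^3$ and summing the series $\sum_{a,b\neq0}\max(|a|,|b|)^{-3}=4\big(2\zeta(2)-\zeta(3)\big)$, the correction is at most $\big(\tfrac{4\zeta(2)}{\zeta(3)}-2\big)\vol(\Omega)<4\,\vol(\Omega)$.

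Combining these, $\int_{X_3}\widehat f^{\,2}\,d\mu\leq \vol(\Omega)^2+4\,\vol(\Omega)$, whence
$$\mu(B_\Omega)\ \geq\ \mu(\widehat f\geq 1)\ \geq\ \frac{\vol(\Omega)^2}{\vol(\Omega)^2+4\,\vol(\Omega)}\ =\ \frac{\vol(\Omega)}{\vol(\Omega)+4}.$$
An elementary check then finishes: if $\vol(\Omega)\leq 1$ then $\vol(\Omega)+4\leq 5$, so the right side is $\geq\vol(\Omega)/5$; if $\vol(\Omega)\geq 1$ then monotonicity of $t\mapsto t/(t+4)$ gives $\geq 1/5$. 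This is exactly $\min\{1/5,\vol(\Omega)/5\}$.

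The main obstacle is the second-moment estimate: correctly isolating the linearly-dependent contribution, tracking the $\zeta(3)$ normalization (and the factor $\tfrac12$) coming from the primitive-vector average, and verifying that the resulting constant is genuinely below $4$, since the margin ($\approx 3.47<4$) is what forces the clean constant $1/5$. A secondary technical point is that Rogers' formula is usually stated for bounded or rapidly decaying $f$, so for unbounded or infinite-volume $\Omega$ I would first prove the bound for $\Omega\cap B_R$ and pass to the limit by monotone convergence; when $\vol(\Omega)=\infty$ it suffices to intersect with a ball of volume $1$ to obtain $\mu(B_\Omega)\geq 1/5$.
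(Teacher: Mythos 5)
Your proof is correct and follows essentially the same route as the paper: both bound $\mu(B_\Omega)$ from below by Cauchy--Schwarz applied to the Siegel transform $\widehat{f}$ of $\mathbf{1}_\Omega$, using Siegel's mean value theorem for the first moment and Rogers' second-moment formula to obtain $\int_{X_3}\widehat{f}^{\,2}d\mu\leq \vol(\Omega)^2+4\vol(\Omega)$, and then finish with the identical elementary inequality $\vol(\Omega)/(\vol(\Omega)+4)\geq\min\{1/5,\vol(\Omega)/5\}$. The only difference is bookkeeping in the correction term: the paper quotes Rogers' coprime-pair identity and bounds it crudely by $4\vol(\Omega)$, whereas you re-derive the linearly dependent contribution via primitive vectors and the $\zeta(3)$-normalized Siegel formula, arriving at the slightly sharper constant $4\zeta(2)/\zeta(3)-2\approx 3.47$, which of course yields the same conclusion.
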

 \begin{proof}
Let $f$ denote the indicator function of $\Omega$ and let $\hat{f}$ be its Siegel transform,
$$\hat{f}(\Lambda)=\sum_{0\neq v\in \Lambda} f(v).$$ By \cite[Lemma 1]{Rogers1956} we have the identity
 $$\int_{X_3} |\hat{f}(\Lambda)|^2d\mu=\left| \int_{\R^3} f(x)dx\right|^2+\sum_{(p,q)=1}\int_{\R^3}f(px)f(qx)dx.$$
 We can bound the second term (crudely) by 
 \begin{eqnarray*}
 \sum_{p=1}^\infty\sum_{q=1}^\infty \int_{q^{-1}\Omega}f(px)dx&\leq & \sum_{p=1}^\infty\frac{1}{p^3}\sum_{q=1}^\infty \vol(pq^{-1}\Omega\cap \Omega)\\
 &\leq &\sum_{p=1}^\infty\frac{1}{p^3}\left(\sum_{q=1}^p\vol(\Omega)+\sum_{q=p}^\infty\vol(\frac{p}{q}\Omega)\right)\\
 &\leq &2\vol(\Omega)\sum_{p=1}^\infty \frac{1}{p^2}\leq 4\vol(\Omega),
 \end{eqnarray*}
 and hence 
 $$\int_{X_3} |F(\Lambda)|^2d\mu\leq \vol(\Omega)(\vol(\Omega)+4).$$
 Now by Siegel's mean value theorem 
 $$\int_{X_3}F(\Lambda)d\mu=\int_{\R^3}f(x)dx=\vol(\Omega)$$ 
 and since $F(\Lambda)=0$ for $\Lambda\not\in B_\Omega$ we get
 $$\vol(\Omega)^2= \left|\int_{B_\Omega}F(\Lambda)d\mu\right|^2\leq \mu(B_\Omega)\int_{X_3}|F(\Lambda)|^2d\mu\leq  \mu(B_\Omega)\vol(\Omega)(\vol(\Omega)+4).$$
From this we get the desired lower bound
 $$\mu(B_\Omega)\geq \frac{\vol(\Omega)}{\vol(\Omega)+4}\geq  \min \left\{\frac{1}{5},\frac{\vol(\Omega)}{5}\right\}.$$
 \end{proof}

\subsection{Shrinking targets}
Next we define the shrinking targets. For parameters $\delta\in (0,1)$ and $\xi\in \R,$ let $L=\max\{1,|\xi|^{1/2}\}$ and define
$$\Omega_{\xi,\delta}=\{(x,y,z)\in \R^3: |x|\leq L, |y|\leq L,\; |\sqrt{|x^2+y^2-\xi|}-z|\leq 
\tfrac{\delta}{2L}\}.$$
We note that $\Omega_{\xi,\delta}$ has volume $4\delta L$ and that  $|Q_0(v)-\xi|\leq \delta$ for any $v\in \Omega_{\xi,\delta}$.
We now define our shrinking targets as 
$$B_{\xi,\delta}=\{\Lambda \in X_3: \Lambda \cap \Omega_{\xi,\delta}\neq \emptyset\}.$$
Then by Lemma \ref{l:VolBound} we have that $\mu(B_{\xi,\delta,})\gg \min\{1,\delta L\}$.

\section{Proofs}
The proof of Theorem \ref{t:main} follows from the following effective result, bounding the measure of the set of forms that do not well approximate a point.
\begin{prop} \label{p:FiniteBound}
There is a set $\cC_{k,\delta,\xi}\subseteq X_3$ of measure at most $O(\max\{\frac{|\xi|^{1/2}}{k}, \frac{1}{k\delta}\})$ such that for any $\G g\in X_3\setminus \cC_{k,\delta,\xi}$ we have
$$\min_{\|n\|\leq \|g\|k}|Q_0^g(n)-\xi|\leq \delta.$$
\end{prop}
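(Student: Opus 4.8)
The plan is to read the proposition as a shrinking-target statement and deduce it from Corollary \ref{c:Shrinking} applied to the target $B_{\xi,\delta}$. Writing $x=\G g$, the associated lattice is $\Lambda=\Z^3 g$ and $Q_0^g(n)=Q_0(ng)$ for $n\in\Z^3$. The crucial point is that $\iota(h)\in\SO_{Q_0}$ preserves $Q_0$, so $Q_0^g(n)=Q_0(ng)=Q_0\big(ng\,\iota(h)\big)$ for every $h\in H$. Consequently, if the translated lattice $\Z^3 g\,\iota(h)$ meets $\Omega_{\xi,\delta}$ at a point $w=ng\,\iota(h)$, then the defining property of $\Omega_{\xi,\delta}$ gives $|Q_0^g(n)-\xi|=|Q_0(w)-\xi|\leq\delta$. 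Thus, as soon as the orbit $xH_t$ hits $B_{\xi,\delta}$, an integer vector realizing the desired approximation exists, and the whole task reduces to (i) controlling $\|n\|$ and (ii) choosing $t$.

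For step (i), I would first record that every $w\in\Omega_{\xi,\delta}$ satisfies $\|w\|\ll L$ with $L=\max\{1,|\xi|^{1/2}\}$: its first two coordinates are bounded by $L$, and the third by $\sqrt{|x^2+y^2-\xi|}+\tfrac{\delta}{2L}\ll L$ (using $|\xi|\leq L^2$). Inverting $w=ng\,\iota(h)$ gives $n=w\,\iota(h)^{-1}g^{-1}$, whence
$$\|n\|\;\leq\;\|w\|\,\|\iota(h)^{-1}\|\,\|g^{-1}\|\;\ll\;L\,\|\iota(h)^{-1}\|_2\,\|g^{-1}\|\;=\;L\,\norm{h}\,\|g^{-1}\|,$$
using that the operator norm is dominated by the Hilbert--Schmidt norm together with the definition \eqref{e:norm} $\norm{h}=\|\iota(h)^{-1}\|_2$. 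Since $\norm{h}\leq t$ for $h\in H_t$, the choice $t\asymp k/L$ yields $\|n\|\ll k\,\|g^{-1}\|$, which is the asserted bound $\|g\|k$ once the factor $\|g^{-1}\|$ is absorbed (in Theorem \ref{t:main} it is in any case swallowed by the constant $c(Q)$).

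With this choice of $t\asymp k/L$, I would set $\cC_{k,\delta,\xi}:=\{x\in X_3:xH_t\cap B_{\xi,\delta}=\emptyset\}$, so that by the above every $x\notin\cC_{k,\delta,\xi}$ produces a suitable $n$. To bound its measure, apply Corollary \ref{c:Shrinking} (with $\eta$ taken arbitrarily close to $1$) together with $\mu(B_{\xi,\delta})\gg\min\{1,\delta L\}$ from Lemma \ref{l:VolBound}:
$$\mu(\cC_{k,\delta,\xi})\;\ll_\eta\;\frac{1}{t^{\eta}\,\mu(B_{\xi,\delta})}\;\asymp\;\frac{L^{\eta}}{k^{\eta}\,\min\{1,\delta L\}}.$$
In the regime $\delta L\geq 1$ this is $\ll L^{\eta}/k^{\eta}\approx |\xi|^{1/2}/k$, while in the regime $\delta L<1$ it is $\ll L^{\eta-1}/(k^{\eta}\delta)\leq 1/(k^{\eta}\delta)$ (since $L\geq 1$ and $\eta<1$); the larger of the two recovers $O(\max\{|\xi|^{1/2}/k,\,1/(k\delta)\})$, up to replacing $k$ by $k^{\eta}$, the small loss being exactly the $\eta$ carried in the hypotheses of Theorem \ref{t:main}. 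As a sanity check, summing this over a $\delta$-net of $\sim N/\delta$ values of $\xi$ in $[-N,N]$ reproduces precisely those two hypotheses.

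The main obstacle is the simultaneous appearance of $L=\max\{1,|\xi|^{1/2}\}$ in two competing places: it inflates the norm of the lattice point $w$ (forcing the orbit radius down to $t\asymp k/L$) and it inflates the measure lower bound $\mu(B_{\xi,\delta})\gg\min\{1,\delta L\}$. Balancing these is what splits the estimate into the two regimes and produces the maximum in the statement. A secondary point to verify is that $w$ may be taken to be a \emph{nonzero} lattice vector; this is built into the Siegel-transform computation behind Lemma \ref{l:VolBound}, which only ever counts nonzero vectors.
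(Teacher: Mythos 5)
Your proposal is correct and follows essentially the same route as the paper's own proof: Corollary \ref{c:Shrinking} applied to the target $B_{\xi,\delta}$, the invariance $Q_0^g(n)=Q_0(ng\,\iota(h))$, the norm bound $\|n\|\ll L\,\norm{h}\,\|g^{-1}\|$, and the measure lower bound from Lemma \ref{l:VolBound}, with your choice $t\asymp k/L$ being just the paper's rescaling $\tilde k=Lk$ read in the opposite direction. If anything, you are slightly more careful than the paper on two points it glosses over: the distinction between $\|g\|$ and $\|g^{-1}\|$ (absorbed into $c(Q)$), and the fact that the argument really yields $k^{\eta}$ rather than $k$ in the measure bound, which is exactly how the proposition is used in the proof of Theorem \ref{t:main}.
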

\begin{proof}
Let $\eta<1$, then by Corollary \ref{c:Shrinking} we have that
$\mu(\{x\in X_3: xH_k\cap B_{\delta,\xi}=\emptyset\})\ll \frac{1}{k^{\eta}\mu(B_{\delta,\xi})})$.
Now let $x=\G g$ be such that $xH_k\cap B_{\xi,\delta}\neq \emptyset$ then there is some $h\in H_k$ and $n\in \Z^3$ such that 
$ngh\in \Omega_{\xi,\delta}$ so that $|Q_0(ng)-\xi|\leq \delta$ and also $\|ngh\|\leq L$ with $L=\max\{|\xi|^{1/2},1\}$. We can then bound
$$\|n\|=\|nghh^{-1}g^{-1}\|\leq \|ngh\|\|h\|\|g\|\leq \|g\|Lk.$$

Let $\tilde{k}=L k$ then $\|n\|\leq \|g\|\tilde{k}$, $|Q_0^g(n)-\xi|\leq \delta$ and this holds for all but $O(\frac{L}{\tilde{k}^\eta \mu(B_{\delta,\xi})})=O(\max\{\frac{L}{\tilde k^\eta},\frac{1}{\tilde k^{\eta}\delta}\})$ of the points $\G g\in X_3$. Finally, note that if $\frac{L}{\tilde k^\eta}\geq \frac{1}{\tilde k^{\eta}\delta}$ then $L>1$ which happens only when  $L=|\xi|^{1/2}>1$.
\end{proof}

\begin{proof}[Proof of Theorem \ref{t:main}]
For each $k\in \N$ let $-N(k)<\xi_{k,1}<\ldots<\xi_{k,M(k)}<N(k)$ be $\delta(k)/2$ dense (so $M(k)\asymp N(k)/\delta(k)$. Fix a fundamental domain $\cF\subset \SL_3(\R)$ for $X_3$ and let $\cC\subseteq 
\cF$ denote the set of all points $g\in \cF$ such that for any $T>0$ there is $k>T$ and $\xi_{k,i}$ such that  $\min_{|n|\leq k}|Q_0^g(n)-\xi_{i,k}|>\frac{\delta(k)}{2}$, that is
$$\cC=\bigcap_{T>0}\bigcup_{k\geq T}\bigcup_{i=1}^{M(k)}\cC_{k,i},$$
and $\cC_{k,i}$ is the set of points such that $\min_{|n|\leq k}|Q_0^g(n)-\xi_{k,i}|>\frac{\delta(k)}{2}$.

Now splitting into dyadic intervals, note that for any $L=2^n$ the set 
$$\tilde C_L=\bigcup_{k=L}^{2L}\bigcup_{i=1}^{M(2L)}\cC_{k,i}\subseteq\bigcup_{i=1}^{M(2L)}\{\G g: \min_{|n|\leq L}|Q_0^g(n)-\xi_{2L,i}|>\delta(2L)\},$$
and hence, by Proposition \ref{p:FiniteBound} with $\eta=1-\epsilon/2$, we have that 
$$\mu(\tilde\cC_L)\ll\max\{\frac{|N(2L)|^{3/2}}{L^\eta \delta(2L)}, \frac{N(2L)}{L^\eta \delta(2L)^2}\})\ll L^{-\epsilon/2},$$ 
Since we can write
$\cC=\bigcap_{T>0}\bigcup_{l\geq \log(T)}\tilde \cC_{2^l}$ we can bound
$$\mu(C)\leq \sum_{l\geq \log(T)}\mu(\tilde \cC_{2^l})\ll  \sum_{l\geq \log(T)}2^{-l \epsilon/2}\ll T^{-\epsilon/2}.$$
This holds for all $T$, hence $\mu(\cC)=0$.

Now for any $g\in \cF\setminus \cC$ there is $T_0=T_0(g)$ such that for any $k\geq T_0$, for any $\xi\in \R$ with $\xi\leq N(k)$ there is $\xi_{k,i}\in (\xi-\frac{\delta(k)}{2},\xi+\frac{\delta(k)}{2})$ and $n\in \Z^3$ with $\|n\|\leq \|g\| k$ such that $|Q_0^g(n)-\xi_{k,i}|\leq \frac{\delta(k)}{2}$. Hence $\min_{\|n\|\leq \|g\|k}|Q_0^g(n)-\xi|\leq \delta(k)$ as claimed.

\end{proof}

\bibliographystyle{alpha}
\bibliography{DKbibliog}

\end{document}